\pgfplotsset{compat=1.11}
\tikzset{
  state/.style={circle,draw,minimum size=6ex},
  arrow/.style={-latex, shorten >=1ex, shorten <=1ex}}
\theoremstyle{plain}
\newtheorem{prop}{Proposition}[section]
\newtheorem{lemma}[prop]{Lemma}
\newtheorem{theorem}[prop]{Theorem}
\theoremstyle{definition}
\theoremstyle{remark}
\newcommand{\C}{\mathbb{C}}
\newcommand{\R}{\mathbb{R}}
\newcommand{\ZZ}{\mathbb{Z}}
\newcommand{\CC}{\mathbb{C}}
\newcommand{\wt}{\widetilde}
\newcommand{\wh}{\widehat}
\newcommand{\pmat}[4]{\begin{pmatrix} #1&#2\\#3&#4\end{pmatrix}}
\DeclareMathOperator{\tr}{tr}
\DeclareMathOperator{\rank}{rank}
\DeclareMathOperator{\supp}{supp}
\DeclareMathOperator{\sign}{sign}
\DeclareMathOperator{\vol}{vol}
\DeclareMathOperator{\Res}{Res}
\newcommand*{\defeq}{\stackrel{\text{def}}{=}}
\begin{document}

\title[]{Low-lying resonances for infinite-area hyperbolic surfaces with long closed geodesics}

\author[L.\@ Soares]{Louis Soares}
\email{louis.soares@gmx.ch}

\subjclass[2020]{58J50 (Primary) 11F72 (Secondary)}  
\keywords{resonances, hyperbolic surfaces, trace formula, congruence covers, Laplace--Beltrami operator}
\begin{abstract}
We consider sequences $(X_n)_{n\in \mathbb{N}}$ of coverings of convex cocompact hyperbolic surfaces $X$ with Euler characterictic $\chi(X_n)$ tending to $-\infty$ as $n\to \infty.$ We prove that for $n$ large enough, each $X_n$ has an abundance of ``low-lying'' resonances, provided the length of the shortest closed geodesic on $X_n$ grows sufficiently fast. When applied to congruence covers we obtain a bound that improves upon a result of Jakobson, Naud, and the author in \cite{JNS}. Our proof uses the wave 0-trace formula of Guillop\'{e}--Zworski \cite{GZ99} together with specifically tailored test-functions with rapidly decaying Fourier transform.
\end{abstract}
\maketitle

\section{Introduction and Statement of Results}
In the last few decades, much effort has been dedicated to the study of quantum resonances, see the expository article of Zworski \cite{Zworski99} for a general introduction. In this paper, we are interested in infinite-area, ``convex cocompact'' hyperbolic surfaces, that is, quotients $X = \Gamma\backslash \mathbb{H}^2$ that can be decomposed as
\begin{equation}\label{decompX}
X = K \cup F_1 \cup \cdots \cup F_m,
\end{equation}
where $K$ is a compact surface with geodesic boundary and $F_1, \dots, F_m$ are finitely many funnel ends glued to $K$. We refer to Borthwick's book \cite{Borthwick_book} for an introduction to the spectral theory of infinite-area hyperbolic surfaces. The classical $L^2$-spectrum for the Laplace--Beltrami operator $\Delta_X$ on $X$ is rather sparse. In particular, $\Delta_X$ has only finitely many $L^2$-eigenvalues. The main spectral data of interest are the \textit{resonances} of $X$, which are defined as the poles of the meromorphic continuation of the resolvent operator 
\begin{equation}\label{res_continued}
R_X(s)\defeq (\Delta_X - s(1-s))^{-1} \colon C_c^\infty(X)\to C^\infty(X).
\end{equation}
In the sequel, we denote by $\mathcal{R}(X)$ the set of resonances for $X$, repeated according to multiplicities, see §\ref{sec:prelims} for details. We denote by $\delta\in (0,1)$ the Hausdorff dimension of the limit set of $X$, which is equal to the \textit{critical exponent} of the Poincar\'e series of $\Gamma$. There are infinitely many resonances, all of which are contained in the half-plane $\mathrm{Re}(s)\leqslant \delta$. The vertical line $\mathrm{Re}(s)=\delta$ has exactly one simple resonance at $s=\delta.$ In general, it is a notoriously difficult problem to precisely locate resonances.

From the point of view of physics, resonances determine the ways in which $X$ can ``vibrate''. Indeed, each $s\in \mathcal{R}(X)$ corresponds to a stationary solution
\begin{equation}
\psi_s(x,t) = e^{(s-\frac{1}{2})t} \phi_s(x), \quad x\in X,\, t\in (0,\infty)
\end{equation}
of the automorphic wave equation
\begin{equation}
\left( \Delta_X + \partial_t - \frac{1}{4} \right)\psi_s = 0.
\end{equation}
In this view, the real part of $s$ determines the decay amplitude and the imaginary part determines the oscillation frequency of the solution $\psi_s.$ 

Resonances not corresponding to $L^2$-eigenvalues are precisely those that lie in the half-plane $\mathrm{Re}(s)\leqslant \frac{1}{2}.$ We study the counting function
$$
\mathsf{N}(\sigma, T; X) \defeq \#\left\{ s\in \mathcal{R}(X) : \text{$ \sigma \leqslant \mathrm{Re}(s)\leqslant \frac{1}{2} $ and $ \vert \mathrm{Im}(s)\vert < T  $} \right\},
$$
where resonances are counted with multiplicities. Guillop\'{e}--Lin--Zworski~\cite{Guillope_Lin_Zworski} proved the upper fractal Weyl bound
$$
\mathsf{N}(\sigma, T; X) \ll T^{1+\delta}, \quad T\to \infty.
$$
The first estimate of this form was established by Sj\"ostrand in his pioneering work on semi-classical Schr\"odinger operators \cite{Sjoestrand}. The following non-exhaustive list of papers \cite{Lu_Sridhar_Zworski,Zworski99,Guillope_Lin_Zworski, Datchev_Dyatlov, NSZ_fractal, faure2022fractal, SjoZwo, PWBKSZ, NZ_fractal, NoRu07, Naud_Pohl_Soares} provides similar fractal Weyl bounds. All of these results support a well-known conjecture in open chaotic systems and quantum chaos, known as the \textit{fractal Weyl law}. In our setting, this conjecture says that for all $\sigma\in \mathbb{R}$ negative enough we should have
$$
\mathsf{N}(\sigma, T; X) \asymp T^{1+\delta}, \quad T\to \infty.
$$
For numerical evidence for this law we refer to \cite{Borthwick_numerical} and the references therein. While upper bounds for counting functions of resonances have become well understood, lower bounds of the same strength have remained elusive. Guillop\'{e}--Zworski \cite{GZ99} proved that for any $0<\epsilon < \frac{1}{2}$ and $\sigma < -\frac{1}{2 \epsilon}$ we have
$$
\mathsf{N}(\sigma, T; X) \neq O( T^{1-\epsilon} ).
$$
Jakobson--Naud \cite{JN2010} proved a lower bound with an explicit dependence on $\delta$, but the growth rate is only logarithmic unless $\Gamma$ is an arithmetic group. 

The aim of this paper is to give lower bounds for $\mathsf{N}(\sigma, T; X)$ for small values of $T$ and $\sigma$ in terms of geometric quantities of $X$. We denote by $\ell_0(X)$ the length of the shortest closed geodesic on $X$ and by $\vol_0(X)$ the volume of the compact core of $X$. Using the Gauss-Bonet formula for the area of hyperbolic polygons we deduce that if $X$ has a decomposition as in \eqref{decompX}, then
$$
\vol_0(X) = \vol(K) = -2\pi (m-1) = -2\pi \chi(X),
$$ 
where $\chi(X)$ is the Euler characteristic of $X$, see also \cite[Lemma 10.3]{Borthwick_book}. Our main result is the following:

\begin{theorem}[Main theorem]\label{thm:main_theorem} 
Let $X$ be a non-elementary, infinite-area, convex cocompact hyperbolic surface with $\delta > \frac{1}{2}$. Let $(X_n)_{n\in \mathbb{N}}$ be a sequence of finite-degree coverings of $X$ such that the following hold true as $n\to \infty$:
\begin{itemize}
\item $\vol_0(X_n) \to \infty$ and
\item there exists some $A>0$ such that $\ell_0(X_n)\geqslant (A-o(1)) \log( \vol_0(X_n) ).$
\end{itemize}
Then for all $\alpha > 1$ and $ \epsilon, \epsilon'>0$ there are constants $c = c(\alpha,\epsilon,\epsilon')>0$ and $n_0 = n_0(\alpha,\epsilon,\epsilon')$ such that for all $n\geqslant n_0$ we have
$$
\mathsf{N}\left( \delta - \frac{1}{A} - \epsilon', (\log \log ( \vol_0(X_n)))^\alpha; X_n \right) \geqslant  c  \vol_0(X_n)^{A( \delta - \frac{1}{2}-\epsilon)} .
$$
\end{theorem}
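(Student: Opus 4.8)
The engine is the wave $0$-trace formula of Guillop\'{e}--Zworski \cite{GZ99}, applied to each cover $X_n$ and tested against a function $\varphi_n$ designed so that the length-spectrum side vanishes while the resonance side is forced to be large. For an even $\varphi\in C_c^\infty((0,\infty))$ that formula reads
\[
\sum_{s\in\mathcal{R}(X_n)}\widehat{\varphi}(s)=\mathcal{I}_n[\varphi]+\sum_{\gamma}\frac{\ell_{\gamma_0}}{2\sinh(\ell_\gamma/2)}\,\varphi(\ell_\gamma),\qquad \widehat{\varphi}(s):=\int_0^\infty e^{(s-\frac12)t}\varphi(t)\,dt,
\]
the last sum running over the closed geodesics of $X_n$, and $\mathcal{I}_n[\varphi]$ the identity/$\mathbb{H}^2$ contribution, which satisfies $|\mathcal{I}_n[\varphi]|\asymp \vol_0(X_n)\,|\langle\Phi,\varphi\rangle|$ for a fixed smooth, polynomially decaying weight $\Phi$ on $(0,\infty)$ (here one uses that the identity term is proportional to $\chi(X_n)$, hence to $\vol_0(X_n)=-2\pi\chi(X_n)$).

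The first, easy step is to dispose of the geodesic term. Since $\ell_0(X_n)\geqslant(A-o(1))\log\vol_0(X_n)$, it suffices to support $\varphi_n$ in an interval $[a_n,b_n]$ with $b_n\leqslant(A-o(1))\log\vol_0(X_n)$: then $X_n$ has no closed geodesic of length in $\operatorname{supp}\varphi_n$, the geodesic sum is $0$, and we are left with $\sum_{s\in\mathcal{R}(X_n)}\widehat{\varphi_n}(s)=\mathcal{I}_n[\varphi_n]$, with $|\mathcal{I}_n[\varphi_n]|\asymp\vol_0(X_n)$ up to factors of lower order in $\vol_0(X_n)$, after a suitable normalization of $\varphi_n$.

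Now set $T_n=(\log\log\vol_0(X_n))^\alpha$, $\sigma=\delta-\tfrac1A-\epsilon'$ (note $\sigma<\tfrac12$, since the statement is non-vacuous only for $A\leqslant\tfrac1{\delta-1/2}$), and $B_n=\{s:\mathrm{Re}(s)\geqslant\sigma,\ |\mathrm{Im}(s)|<T_n\}$. Splitting $\mathcal{R}(X_n)=(B_n\cap\mathcal{R}(X_n))\sqcup(\mathcal{R}(X_n)\setminus B_n)$ in the identity $|\mathcal{I}_n[\varphi_n]|=\bigl|\sum_s\widehat{\varphi_n}(s)\bigr|$ gives
\[
\mathsf{N}(\sigma,T_n;X_n)\;\geqslant\;\frac{|\mathcal{I}_n[\varphi_n]|-\sum_{s\notin B_n}|\widehat{\varphi_n}(s)|}{\sup_{s\in B_n}|\widehat{\varphi_n}(s)|},
\]
so it remains to arrange, by the choice of $\varphi_n$, that $\sum_{s\notin B_n}|\widehat{\varphi_n}(s)|\leqslant\tfrac12|\mathcal{I}_n[\varphi_n]|$ and that the resulting quotient is $\gg\vol_0(X_n)^{A(\delta-\frac12-\epsilon)}$. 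The error sum splits into three pieces, each controlled by a density bound for resonances together with decay of $\widehat{\varphi_n}$: resonances with $\mathrm{Re}(s)$ very negative are handled by supporting $\varphi_n$ away from $0$ (so that $|\widehat{\varphi_n}(s)|$ decays in a negative power of $|\mathrm{Re}(s)|$) against the crude Weyl estimate $\#\{|s|\leqslant R\}\ll\vol_0(X_n)R^2$; resonances with $\mathrm{Re}(s)\in[\sigma-1,\sigma]$ and small imaginary part by the inequality $\sigma<\tfrac12$ together with the upper fractal Weyl bound $\mathsf{N}(\cdot,T;X_n)\ll\vol_0(X_n)T^{1+\delta}$ of Guillop\'{e}--Lin--Zworski \cite{Guillope_Lin_Zworski}; and resonances with $|\mathrm{Im}(s)|\geqslant T_n$ — this is where the hypothesis $\alpha>1$ and the ``rapidly decaying Fourier transform'' of the abstract come in — by taking $\varphi_n$ in a Gevrey class of order $\theta$ with $1<\theta<\alpha$, so that $|\widehat{\varphi_n}(s)|\ll e^{O(b_n)}\exp(-c|\mathrm{Im}(s)|^{1/\theta})$; since $T_n=(\log\log\vol_0(X_n))^\alpha$ and $\theta<\alpha$, the tail over $|\mathrm{Im}(s)|\geqslant T_n$, weighted by the fractal Weyl density, is $o(\vol_0(X_n))$, as needed. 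Meanwhile $\sup_{s\in B_n}|\widehat{\varphi_n}(s)|$ has dominant size $\asymp e^{(\delta-\frac12)b_n}$, attained near $s=\delta$.

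The technical heart — and the main obstacle — is the simultaneous optimization of these mutually competing demands on $\varphi_n$: support length $b_n$ below $\ell_0(X_n)$ (geodesic term dies); Gevrey order $\theta<\alpha$ and $b_n$ large enough that the Fourier decay above beats the fractal Weyl density; sufficient vanishing near $t=0$; $|\langle\Phi,\varphi_n\rangle|$ of order one (so $|\mathcal{I}_n[\varphi_n]|\asymp\vol_0(X_n)$); and $\sup_{B_n}|\widehat{\varphi_n}|$ as small as the constraint $e^{(\delta-\frac12)b_n}$ permits. Carrying out this bookkeeping with $b_n$ pushed up to essentially $A\log\vol_0(X_n)$ — the most the hypothesis on $\ell_0(X_n)$ allows — and absorbing the $o(1)$, the $\epsilon'$, and the error slack into the loss $\epsilon$, makes the quotient in the displayed inequality at least $c\,\vol_0(X_n)^{A(\delta-\frac12-\epsilon)}$ for $n\geqslant n_0$, which is the claim.
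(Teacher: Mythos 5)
Your overall architecture (wave $0$-trace formula, kill the geodesic term by supporting $\varphi_n$ below $\ell_0(X_n)$, split $\mathcal{R}(X_n)$ into a low-frequency rectangle and its complement, control the tail with a density estimate) matches the paper's, but two central steps are wrong in ways that defeat the argument.

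\emph{The forcing term.} You treat the identity contribution $\mathcal{I}_n[\varphi_n]$ as the positive quantity that drives the count and divide it by $\sup_{s\in B_n}|\widehat{\varphi_n}(s)|$. In the shifted convention $\widehat{\varphi}(s)=\int_0^\infty e^{(s-\frac{1}{2})t}\varphi(t)\,dt$, that supremum over $B_n$ is attained near $s=\delta$ and is $\asymp e^{(\delta-\frac{1}{2})b_n}\approx\vol_0(X_n)^{A(\delta-\frac{1}{2})}$, while your numerator is at best $\asymp\vol_0(X_n)$. The quotient is $\vol_0(X_n)^{1-A(\delta-\frac{1}{2})}$, which equals the target exponent $A(\delta-\frac{1}{2})$ only at the coincidental value $A(\delta-\frac{1}{2})=\frac{1}{2}$. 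The paper does the opposite: since $\varphi\geq 0$ makes the identity contribution nonpositive, it is simply dropped, giving $\sum_{s\in\mathcal{R}(X_n)}\widehat{\varphi}(is)\leq 0$ once the geodesic sum vanishes; then the single resonance at $s=\delta$, the common leading resonance of all finite covers, supplies the forcing $S_1\gg\eta L\,e^{\delta L(1-\eta)}$, and dividing by the factor $e^{\frac{1}{2}L(1+\eta)}$ from the strip $\{\sigma\leq\mathrm{Re}(s)\leq\frac{1}{2}\}$ yields the correct exponent. A related defect: your displayed inequality bounds $\#\bigl(B_n\cap\mathcal{R}(X_n)\bigr)$, not $\mathsf{N}(\sigma,T_n;X_n)$; $B_n$ contains the eigenvalue resonances with $\mathrm{Re}(s)>\frac{1}{2}$, which the counting function excludes, and those are precisely where $|\widehat{\varphi_n}|$ is largest, so they cannot be absorbed for free.

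\emph{The Fourier decay.} Gevrey order $\theta\in(1,\alpha)$ gives $|\widehat{\varphi_n}(s)|\ll e^{O(b_n)}\exp(-c|\mathrm{Im}(s)|^{1/\theta})$, which is far too weak at the scale $T_n=(\log\log\vol_0(X_n))^\alpha$: one has $T_n^{1/\theta}=(\log\log\vol_0(X_n))^{\alpha/\theta}\ll\log\vol_0(X_n)$, so the exponential cannot overcome the prefactor $e^{O(b_n)}\vol_0(X_n)T_n^2$ produced by the Weyl bound \eqref{pohl_soares}. No Gevrey order $\theta>1$ works, and $\theta>1$ is forced by compact support (Paley--Wiener). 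The paper instead uses the Beurling--Malliavin-type test function of Lemma \ref{lem:test_function}, whose decay $\exp\bigl(-c|\mathrm{Re}(z)|/(\log|\mathrm{Re}(z)|)^\beta\bigr)$ is genuinely stronger, and, crucially, dilates it: in $\varphi_{L,\eta}(x)=\varphi((x-L)/(\eta L))$ the decay variable becomes $\eta L|\mathrm{Im}(s)|\approx(\log\vol_0(X_n))(\log\log\vol_0(X_n))^{\alpha-\nu}$, producing a tail factor $\exp\bigl(-c'(\log\vol_0(X_n))(\log\log\vol_0(X_n))^{\alpha-\beta-\nu}\bigr)$ that beats every polynomial in $\vol_0(X_n)$ once $\alpha>\beta+\nu>1$. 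This is exactly where $\alpha>1$ enters; without both the quasi-analytic decay and the $\eta L$-dilation, the tail over $|\mathrm{Im}(s)|\geq T_n$ does not close.
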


We remark that any finite-degree cover of $X$ has the same critical exponent $\delta,$ that is, the point $s=\delta$ is a common leading  resonance for all finite-degree covers of $X$.

Our next goal is to specialize the main result to congruence covers of arithmetic surfaces. When $\Gamma$ a subgroup of $\mathrm{SL}_2(\mathbb{Z})$ and $n\in \mathbb{N}$, we define the principal congruence subgroup of $\Gamma$ of level $n$ by
\begin{align*}
\Gamma(n) &\defeq \left\{ \text{$\gamma\in \Gamma$ : $\gamma\equiv I$ mod $n$} \right\},
\end{align*}
and we write $X(n) = \Gamma(n)\backslash\mathbb{H}^2$ for the associated covering. Infinite-area congruence covers have attracted a lot of attention due to their applications in number theory and graph theory, see \cite{Gamburd1,BGS,OhWinter}. In these papers, uniform spectral gaps (that is, uniform resonance-free strips) are established. Our next result is a somewhat complementary statement, stating that congruence covers have an abundance of low-lying resonances. More concretely, we have the following:

\begin{theorem}\label{thm:congr}
Assume that $X=\Gamma\backslash \mathbb{H}^2$ is as in Theorem \ref{thm:main_theorem} and that $\Gamma \subset \mathrm{SL}_2(\mathbb{Z})$. Then for all $\alpha > 1$ and $ \epsilon,\epsilon' >0$ there are $c = c(\alpha,\epsilon,\epsilon')>0$ and $n_0 = n_0(\alpha,\epsilon,\epsilon') \in \mathbb{N}$ such that for all positive integers $n$ with $\gcd(n,n_0)=1$ we have
\begin{equation}\label{estimatecongr}
\mathsf{N}\left( \delta - \frac{3}{4} - \epsilon', (\log \log n)^\alpha; X(n) \right) \geqslant c n^{4 ( \delta-\frac{1}{2} )-\epsilon}.
\end{equation}
\end{theorem}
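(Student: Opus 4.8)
\textbf{Proof plan for Theorem~\ref{thm:congr}.} The idea is to deduce the statement from Theorem~\ref{thm:main_theorem} applied to the sequence $X_n = X(n)$; all the work lies in verifying the two hypotheses of that theorem with the \emph{sharp} constant $A = \tfrac{4}{3}$, since it is $A = \tfrac{4}{3}$ that produces the exponents $\tfrac{3}{4}$ and $4(\delta-\tfrac12)$ in \eqref{estimatecongr}. Throughout I would enlarge $n_0$ finitely many times: it simultaneously plays the role of the ``bad modulus'' in the strong approximation step below, of a lower threshold on $n$, and it absorbs the constant $n_0(\alpha,\epsilon,\epsilon')$ furnished by Theorem~\ref{thm:main_theorem}. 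Since one only ever considers $n$ coprime to $n_0$, one may and does assume $n$ is as large as desired.

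First I would compute the compact core volume. Writing $d_n = [\Gamma:\Gamma(n)]$ for the covering degree of $X(n)\to X$, injectivity of $\Gamma/\Gamma(n)\hookrightarrow\mathrm{SL}_2(\mathbb{Z}/n\mathbb{Z})$ gives $d_n\leqslant|\mathrm{SL}_2(\mathbb{Z}/n\mathbb{Z})|\ll n^3$, while strong approximation for $\mathrm{SL}_2$ — available because a non-elementary $\Gamma$ is Zariski dense in $\mathrm{SL}_2$, and obtainable from the classification of subgroups of $\mathrm{SL}_2(\mathbb{F}_p)$ together with Goursat's lemma and the Chinese remainder theorem — yields a fixed $n_0$ with $\Gamma\twoheadrightarrow\mathrm{SL}_2(\mathbb{Z}/n\mathbb{Z})$ whenever $\gcd(n,n_0)=1$, so that $d_n = |\mathrm{SL}_2(\mathbb{Z}/n\mathbb{Z})|\asymp n^3$ for such $n$. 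Since $X(n)$ is again convex cocompact and $\chi$ is multiplicative under finite coverings, the formula following \eqref{decompX} gives $\vol_0(X(n)) = -2\pi\chi(X(n)) = -2\pi\chi(X)\,d_n\asymp n^3$; in particular $\vol_0(X(n))\to\infty$, with $\log\vol_0(X(n)) = 3\log n + O(1)$ and $\log\log\vol_0(X(n)) = \log\log n + O(1)$.

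The crux is the lower bound on the shortest closed geodesic. For a hyperbolic $\gamma\in\Gamma(n)$ I would write $\gamma = I + nM$ with $0\neq M\in M_2(\mathbb{Z})$ and expand $1 = \det(I+nM) = 1 + n\,\tr M + n^2\det M$, which gives the identity $\tr M = -n\det M$ and hence $\tr\gamma = 2 - n^2\det M$. If $\det M$ were zero then $\tr\gamma = 2$, so $\gamma$, having both eigenvalues equal to $1$, would be parabolic or the identity; but $X$ convex cocompact forces $\Gamma$ to have no parabolic elements, and $\gamma\neq I$ — a contradiction. Thus $\det M$ is a nonzero integer and $|\tr\gamma|\geqslant n^2 - 2$. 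Since the length $\ell(\gamma)$ of the associated closed geodesic satisfies $2\cosh(\ell(\gamma)/2) = |\tr\gamma|$, this gives $\ell(\gamma)\geqslant(4-o(1))\log n$ uniformly over hyperbolic $\gamma\in\Gamma(n)$, whence, using the previous paragraph,
$$
\ell_0(X(n))\ \geqslant\ (4-o(1))\log n\ =\ \bigl(\tfrac{4}{3}-o(1)\bigr)\log\vol_0(X(n)).
$$
So Theorem~\ref{thm:main_theorem} applies to $(X(n))$ with $A = \tfrac{4}{3}$ — and this determinant trick, which upgrades the modulus from $n$ to $n^2$, is the one step I would expect to matter: the naive bound $|\tr\gamma|\geqslant n-2$ only yields $A = \tfrac{2}{3}$ and the weaker exponents $\delta-\tfrac{3}{2}-\epsilon'$, $n^{2(\delta-\frac12)-\epsilon}$.

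Finally I would plug in. Fixing $\alpha>1$, $\epsilon,\epsilon'>0$ and some $\alpha_0\in(1,\alpha)$, and noting that finite covers of $X$ are again non-elementary and convex cocompact with the same critical exponent $\delta>\tfrac12$, Theorem~\ref{thm:main_theorem} (with $A=\tfrac{4}{3}$ and with $\tfrac{\epsilon}{4},\epsilon',\alpha_0$ in place of $\epsilon,\epsilon',\alpha$) gives, for $n$ coprime to $n_0$ and large,
$$
\mathsf{N}\!\bigl(\delta-\tfrac{3}{4}-\epsilon',\ (\log\log\vol_0(X(n)))^{\alpha_0};\ X(n)\bigr)\ \geqslant\ c\,\vol_0(X(n))^{\frac{4}{3}(\delta-\frac12-\frac{\epsilon}{4})}\ \gg\ n^{4(\delta-\frac12)-\epsilon},
$$
using $\vol_0(X(n))\gg n^3$ in the last step. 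To pass from $(\log\log\vol_0(X(n)))^{\alpha_0}$ to $(\log\log n)^{\alpha}$ one uses $\log\log\vol_0(X(n)) = \log\log n + O(1)$ and $\alpha_0<\alpha$, which force $(\log\log\vol_0(X(n)))^{\alpha_0}\leqslant(\log\log n)^{\alpha}$ for $n$ large, together with monotonicity of $\mathsf{N}(\sigma,T;\,\cdot\,)$ in $T$; enlarging $n_0$ one last time and adjusting $c$ then yields \eqref{estimatecongr}. Apart from the sharp trace estimate of the third paragraph, every ingredient is standard, so that is the only place where I anticipate needing care.
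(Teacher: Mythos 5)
Your proof follows the paper's own argument step for step: the volume estimate $\vol_0(X(n))\asymp n^3$ via surjectivity of the reduction map $\pi_n$ (the paper cites Bourgain--Varj\'u for this), the congruence $\tr\gamma\equiv 2\pmod{n^2}$ for nontrivial $\gamma\in\Gamma(n)$ (attributed in the paper to Sarnak--Xue, and packaged by you via the clean identity $\tr\gamma = 2 - n^2\det M$ together with absence of parabolics) giving $\ell_0(X(n))\geqslant(\tfrac43-o(1))\log\vol_0(X(n))$, and then Theorem~\ref{thm:main_theorem} with $A=\tfrac43$. Your treatment of the passage from $(\log\log\vol_0(X(n)))^\alpha$ to $(\log\log n)^\alpha$ via an auxiliary exponent $\alpha_0\in(1,\alpha)$ and monotonicity of $\mathsf{N}(\sigma,T;\cdot)$ in $T$ is in fact a touch more explicit than the paper, which folds this into an unexplained ``increase $n_0$ if necessary.''
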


This result strengthens Theorem 1.4 in the work of Jakobson, Naud, and the author \cite{JNS} in several ways. 

The proof of our main Theorem \ref{thm:main_theorem} uses the wave 0-trace formula of Guillop\'{e}--Zworski \cite{GZ99} together with specifically tailored test-functions $\varphi$ with rapidly decaying Fourier transform $ \widehat{\varphi}(z) $ as $\vert \mathrm{Re}(z)\vert\to \infty$. The construction of these test-functions uses the same harmonic analysis ingredient as in \cite{JNS}, which in the present paper corresponds to Lemma \ref{lem:test_function} below. Theorem \ref{thm:congr} can be deduced from the main theorem in conjunction with lower bounds for the minimal length of closed geodesics on congruence covers. We conjecture that Theorems \ref{thm:main_theorem} and \ref{thm:congr} hold true for \textit{all} geometrically finite hyperbolic surfaces. The restriction to the convex cocompact case is needed because we also require the following bound for the global counting function in \cite{Pohl_Soares}:
$$
\mathsf{N}(r;X_n) \defeq \#\{ s\in \mathcal{R}(X) : \vert s\vert \leqslant r \} \ll \vol_0(X_n) r^2,
$$
where the implied constant depends only on the base surface $X$. This is currently only known for covers of convex cocompact surfaces.

The paper is organized as follows: in Section \ref{sec:prelims} we gather the necessary preliminaries needed for our main Theorem \ref{thm:main_theorem}, which is then proven in Section \ref{sec:proof1}. In Section \ref{sec:proof2} we prove Theorem \ref{thm:congr}.\\

\paragraph{\textbf{Notation}} 
We write $f (x)\ll g(x)$ or $ f (x) = O(g(x)) $ interchangeably to mean that there exists an implied constant $C>0$ such that $\vert f (x)\vert \leqslant  C \vert g(x)\vert$ for all $x \geqslant C$. We write $f(x)\asymp g(x)$ if $g(x)\ll f (x)\ll g(x)$. We write $f (x)\ll_y g(x)$, $ f (x) = O_y(g(x)) $, or $ f (x) \asymp_y g(x)$ to emphasize that the implied constant depends on $y$. We write $f(x)=o(1)$ to mean $f(x)\to 0$ as $x\to \infty$. All the implied constants are allowed to depend on base surface $X$, which we assume to be fixed throughout. We define the Fourier transform of a function $\psi\in C_c^\infty(\mathbb{R})$ as usual by
\begin{equation}\label{defi:fourier}
\widehat{\psi}(z) \defeq \int_{-\infty}^{\infty} \psi(x)e^{-izx} dx, \quad z\in \mathbb{C}.
\end{equation}

\section{Preliminaries}\label{sec:prelims}

\subsection{Hyperbolic geometry} Let us recall some basic facts about hyperbolic surfaces, referring the reader to \cite{Borthwick_book} for a comprehensive discussion. One of the standard models for the hyperbolic plane is the Poincar\'{e} half-plane
$$
\mathbb{H}^2=\{ x+iy\in \C\ :\ y>0\} 
$$
endowed with its standard metric of constant curvature $-1$,
$$
ds^2=\frac{dx^2+dy^2}{y^2}.
$$ 
The group of orientation-preserving isometries of $(\mathbb{H}^2, ds)$ is isomorphic to $\mathrm{PSL}_2(\R)$. It acts on the extended complex plane $\overline{\C} = \C \cup \{ \infty\}$ (and hence also on $\mathbb{H}^2$) by M\"{o}bius transformations
$$
\gamma=\pmat{a}{b}{c}{d}\in \mathrm{PSL}_2(\R),\quad z\in\overline{\C} \Longrightarrow  \gamma(z) = \frac{az+b}{cz+d}.
$$
An element $\gamma\in \mathrm{PSL}_2(\R)$ is either
\begin{itemize}
\item \textit{hyperbolic} if $\vert \mathrm{tr}(\gamma)\vert >2$, which implies that $\gamma$ has two distinct fixed points on the boundary $\partial \mathbb{H}^2$,
\item \textit{parabolic} if $\vert \mathrm{tr}(\gamma)\vert <2$, which implies that $\gamma$ has precisely one fixed point on $\partial \mathbb{H}^2$, or
\item \textit{elliptic} if $\vert \mathrm{tr}(\gamma)\vert = 2$, which implies that $\gamma$ has precisely one fixed point in the hyperbolic plane $\mathbb{H}^2$.
\end{itemize}

\subsection{Hyperbolic surfaces and Fuchsian groups}
Every hyperbolic surface $X$ is isometric to a quotient $\Gamma\backslash \mathbb{H}^2$, where $\Gamma$ is a \textit{Fuchsian} group, that is, a discrete subgroup $\Gamma\subset\mathrm{PSL}_2(\R)$. A Fuchsian group $\Gamma$ is called
\begin{itemize}
\item \textit{torsion-free} if it contains no elliptic elements,
\item \textit{non-cofinite} if the quotient $\Gamma\backslash \mathbb{H}^2$ has infinite-area,
\item \textit{non-elementary} if it is generated by more than one element, and
\item \textit{geometrically finite} if it is finitely generated, which is equivalent with $\Gamma\backslash \mathbb{H}^2$ being geometrically and topologically finite.
\end{itemize}

All the Fuchsian groups $\Gamma$ considered in this paper satisfy all the above conditions. The \textit{limit set} $\Lambda$ of $X$ is defined as the set of accumulation points of all orbits of the action of $\Gamma$ on $\mathbb{H}^2$. Under the above conditions, the limit set is a Cantor-like fractal subset of the boundary $\partial \mathbb{H}^2 \cong \mathbb{R}\cup \{ \infty \}$ with Hausdorff dimension $\delta$ lying strictly between $0$ and $1$.

Furthermore, $\Gamma$ is called \textit{convex cocompact} if it is finitely generated and if it contains neither parabolic nor elliptic elements. This is equivalent with the \textit{convex core} of $X = \Gamma\backslash \mathbb{H}^2$ being compact, or with $X$ being geometrically finite and having no cusps.

\subsection{Spectral theory of infinite-area hyperbolic surfaces}\label{sec:specth}
Let us review some aspects of the spectral theory of infinite-area hyperbolic surfaces. We refer the reader to \cite{Borthwick_book} for an in-depth account of the material given here. The $ L^{2} $-spectrum of the Laplace--Beltrami operator $\Delta_X $ on an infinite-area hyperbolic surface $X$ is rather sparse and was described by Lax--Phillips \cite{Lax_Phillips_I} and Patterson \cite{Patterson} as follows:
\begin{itemize}
\item The absolutely continuous spectrum is equal to $ [1/4, \infty) $.

\item The pure point spectrum is finite and contained in the interval $(0,1/4)$. In particular, there are no eigenvalues embedded in the continuous spectrum.

\item If $\delta \leqslant 1/2$ then the pure point spectrum is empty. If $\delta>1/2$ then $\lambda_0(X) = \delta(1-\delta)$ is the smallest eigenvalue.
\end{itemize}
In view of these facts, the resolvent operator
$$
R_X(s) \defeq \big( \Delta_X - s(1-s)\big)^{-1}\colon L^2(X) \to L^2(X)
$$
is defined for all $s\in\CC$ with $ \mathrm{Re}(s)>1/2$ and $s(1-s)$ not being an $L^2$-eigenvalue of $\Delta_X$. From Guillop\'{e}--Zworski \cite{GuiZwor2} we know that the resolvent extends to a meromorphic family
\begin{equation}\label{resolvent_continued}
R_X(s) \colon C_{c}^{\infty}(X) \to C^{\infty}(X)
\end{equation}
on $\CC$ with poles of finite rank. The poles of $R_X(s)$ are called the \textit{resonances} of $ X $; the multiplicity $m_X(\zeta) $ of a resonance $\zeta$ is defined as the rank of the residue operator of $R_X(s)$ at $s=\zeta$:
$$
m_X(\zeta) \defeq \rank \Res_{s=\zeta} R_X(s).
$$
We denote by $\mathcal{R}(X)$ the multiset of resonances of $X$ repeated according to multiplicities. Resonances are contained in the half-plane $\mathrm{Re}(s)\leqslant  \delta$, with no resonances on the vertical line $\mathrm{Re}(s)=\delta$ other than a simple resonance at $s=\delta$.

Note that resonances $s$ on the half-plane $\mathrm{Re}(s)>\frac{1}{2}$ correspond one-to-one to eigenvalues of $\Delta_X$ via the equation $\lambda = s(1-s)$. In the present paper we focus on the case $\delta  > \frac{1}{2}$. It is convenient to define the multiset
$$
\Omega(X) \defeq \left\{ s\in \left(\frac{1}{2},\delta\right] : \text{$\lambda = s(1-s)$ is an $L^2$-eigenvalue for $X$} \right\},
$$
where each $s$ is repeated according to its multiplicity $m_X(s)$.

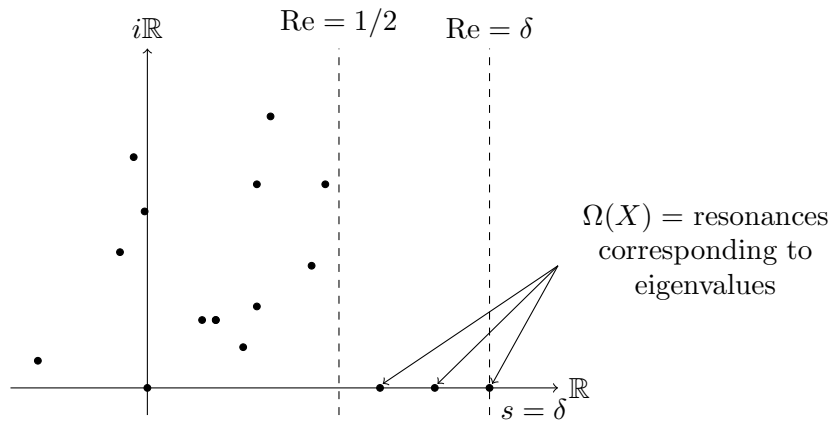
\begin{figure}[H]
\centering
\captionsetup{justification=centering}
\begin{tikzpicture}[xscale=1.8, yscale=1.8]
    \draw [->] (-1,0) -- (3,0) node [right, font=\small]  {$  \mathbb{R} $};
    \draw [->] (0,-0.2) -- (0,2.5) node [above, font=\small] {$ i \mathbb{R} $};
    \draw[dashed] (2.5,-0.2) -- (2.5,2.5) node [above, font=\small] {$ \mathrm{Re}= \delta $};
	\draw[dashed] (1.4,-0.2) -- (1.4,2.5) node [above, font=\small] {$ \mathrm{Re} = 1/2 $};
    \filldraw (2.5,0) circle (0.7pt) node[below right, font=\small] {$s=\delta$};
    \filldraw (1.7,0) circle (0.7pt);
    \filldraw (2.1,0) circle (0.7pt);
	\filldraw (0,0) circle (0.7pt);
	\filldraw (0.8,0.6) circle (0.7pt);
	\filldraw (0.4,0.5) circle (0.7pt);
	\filldraw (0.7,0.3) circle (0.7pt);
	\filldraw (-0.2,1) circle (0.7pt);
	\filldraw (-0.8,0.2) circle (0.7pt);
	\filldraw (0.9,2) circle (0.7pt);
	\filldraw (-0.1,1.7) circle (0.7pt);
	\filldraw (0.8,1.5) circle (0.7pt);
	\filldraw (-0.02,1.3) circle (0.7pt);
	\filldraw (0.5,0.5) circle (0.7pt);
	\filldraw (1.3,1.5) circle (0.7pt);
	\filldraw (1.2,0.9) circle (0.7pt);
	\filldraw (0.5,0.5) circle (0.7pt);
   	\draw (3,1) node[right, font=\small] {\begin{tabular}{c}
    $\Omega(X) =$ resonances \\
    corresponding to \\ 
    eigenvalues
\end{tabular}};
	\draw[->] (3,0.9) -- (2.52,0.03);
   	\draw[->] (3,0.9) -- (1.72,0.03);
   	\draw[->] (3,0.9) -- (2.12,0.03);
\end{tikzpicture}
\caption{Distribution of resonances for infinite-area $ \Gamma\backslash\mathbb{H}^2 $ in the case $ \delta > \frac{1}{2} $}
\label{fig:resonance_infinite_area}
\end{figure}

\subsection{The global resonance counting function} The distribution of resonances and eigenvalues of finite-area hyperbolic surfaces is fairly well-understood. In particular, the classical Selberg trace formula leads to a Weyl law for the global resonance counting function of the form
$$
\mathsf{N}(r;X) \defeq \#\{ s\in \mathcal{R}(X) : \vert s\vert \leqslant r \} \sim \frac{\vol(X)}{2\pi} r^2, \quad r\to \infty,
$$
proven in this case by M\"uller \cite{Mueller_scattering} and Parnovski \cite{Par95}. Here and henceforth, resonances are counted with multiplicities. In the infinite-area case, much less is known. Guillop\'e and Zworski \cite{GZ_upper_bounds,GZ_scattering_asympt} proved that for any geometrically finite hyperbolic surface $X$ there are constants $C_i =C_i(X)>0$, $i=1,2$, and $r_0=r_0(X)$ such that for all $r>r_0$ we have
\begin{equation}\label{ordergrowth}
C_1 r^2 \leqslant \mathsf{N}(r;X)\leqslant C_2 r^2. 
\end{equation}
In this paper we need to understand the behavior of $\mathsf{N}(r;X_n)$ for families of coverings $(X_n)_{n\in \mathbb{N}}$ of $X$ as $\vol_0(X_n) \to \infty.$ Unfortunately, the proof method in \cite{GZ_upper_bounds,GZ_scattering_asympt} yields only ineffective constant $C_1$ and $C_2$ in \eqref{ordergrowth} with no clear dependence on the geometry of $X$. However, when $X$ is a convex cocompact surface, it is known from Pohl--Soares \cite{Pohl_Soares} that there are constants $r_0 > 0$ and $C>0$, depending only on $X$, such that for any family $(X_n)_{n\in \mathbb{N}}$ of finite-degree coverings of $X$ and for all $r\geqslant r_0$ we have
\begin{equation}\label{pohl_soares}
C^{-1} \vol_0(X_n) r^2  \leqslant \mathsf{N}(r;X_n) \leqslant C \vol_0(X_n) r^2.
\end{equation}
This result follows essentially from an upper bound for \textit{twisted} Selberg zeta functions $Z_\Gamma(s,\rho)$ (sometimes also known as $L$-functions): there exists some $C>0$, depending only on $X = \Gamma\backslash\mathbb{H}^2$, such that for all finite-dimensional, unitary representations $\rho\colon \Gamma\to \mathrm{U}(V)$, we have
\begin{equation}\label{zetabound}
\log \vert Z_\Gamma(s,\rho)\vert \leq C \dim(\rho) \langle s \rangle^2, \quad  \langle s \rangle \defeq \sqrt{1+\vert s\vert^2}.
\end{equation}
We will actually only use the upper bound in \eqref{pohl_soares} in the present paper. This result generalizes and improves upon an earlier result of Jakobson--Naud \cite{JN} who have studied the specific case of congruence covers. Unfortunately, the estimate \eqref{zetabound}, which relies on the thermodynamic formalism for Selberg zeta functions, has so far only been established for convex cocompact groups $\Gamma.$ We should also mention the sharp upper bound for $\mathsf{N}(r;X)$ in terms of geometric quantities of $X$ due to Borthwick \cite{sharpbounds}.

\subsection{A harmonic analysis result}
We now state and re-prove a harmonic analysis result from the paper \cite{JNS}. Roughly speaking, it says that we can find a smooth and compactly supported function $\psi$ whose Fourier transform $\widehat{\psi}(z)$ has almost exponential decay as $\vert\mathrm{Re}(z)\vert\to \infty$. This can be seen as the complex analogue of the classical Beurling--Malliavin Multiplier theorem. Our proof is an almost verbatim repetition of the proof given in \cite{JNS} which in turn closely follows the construction given in \cite[Chapter~5, Lemma~2.7]{Katznelson}.

\begin{lemma}[Lemma 6.2 in \cite{JNS}]\label{lem:test_function} For all $ \beta > 1 $ there are constants $c_1 >0$ and $c_2  >0$ and a positive smooth function $ \psi $ supported in $[-1,1]$ such that
$$
\vert \widehat{\psi}(z) \vert \leqslant c_{1} e^{\vert \mathrm{Im}(z)\vert} \exp\left( -c_{2} \frac{\vert \mathrm{Re}(z)\vert }{(\log \vert \mathrm{Re}(z)\vert)^{\beta}} \right)
$$
for all $z\in \mathbb{C}$ with $\vert \mathrm{Re}(z)\vert\geqslant  2 .$
\end{lemma}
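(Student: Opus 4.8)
The plan is to build $\psi$ as an infinite convolution of scaled indicator functions, following the classical construction of Beurling--Malliavin-type multipliers as in \cite[Chapter~5, Lemma~2.7]{Katznelson}. First I would pick a sequence of positive reals $(a_k)_{k\geqslant 1}$ with $\sum_k a_k = 1$ and set $\psi_k = \frac{1}{2a_k}\mathbf{1}_{[-a_k,a_k]}$, so that each $\psi_k$ is a probability density supported in $[-a_k,a_k]$. The candidate test-function is the convolution product $\psi = \psi_1 * \psi_2 * \psi_3 * \cdots$. Since the supports add up, $\psi$ is supported in $[-\sum_k a_k, \sum_k a_k] = [-1,1]$; positivity is immediate; and one checks that the product converges to a smooth function provided the $a_k$ do not decay too fast (the standard sufficient condition is $\sum_k k\, a_k < \infty$ together with enough room for differentiability, which one gets by choosing $a_k$ like $c/(k(\log k)^2)$ or similar, adjusted to the parameter $\beta$). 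The Fourier transform factorizes as
\begin{equation*}
\widehat{\psi}(z) = \prod_{k=1}^{\infty} \widehat{\psi_k}(z) = \prod_{k=1}^{\infty} \frac{\sin(a_k z)}{a_k z}.
\end{equation*}

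Next I would estimate this product on the strip-like regions dictated by $\mathrm{Re}(z)$ and $\mathrm{Im}(z)$. For each factor one has the elementary bound $\left| \frac{\sin(a_k z)}{a_k z} \right| \leqslant \frac{e^{a_k |\mathrm{Im}(z)|}}{1 + a_k |\mathrm{Re}(z)|}$ (or a variant thereof valid on all of $\mathbb{C}$), so that
\begin{equation*}
|\widehat{\psi}(z)| \leqslant e^{|\mathrm{Im}(z)| \sum_k a_k} \prod_{k=1}^{\infty} \frac{1}{1 + a_k |\mathrm{Re}(z)|} = e^{|\mathrm{Im}(z)|} \prod_{k=1}^{\infty} \frac{1}{1 + a_k |\mathrm{Re}(z)|}.
\end{equation*}
The exponential prefactor $e^{|\mathrm{Im}(z)|}$ is exactly the Paley--Wiener factor expected from support in $[-1,1]$, so the whole game is to show the remaining product over $k$ decays almost exponentially in $|\mathrm{Re}(z)|$. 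Writing $t = |\mathrm{Re}(z)|$, one has $-\log \prod_k (1+a_k t)^{-1} = \sum_k \log(1 + a_k t)$. Only the terms with $a_k t \gtrsim 1$, i.e. $k \lesssim K(t)$ where $a_{K(t)} \asymp 1/t$, contribute substantially, and for those $\log(1+a_k t) \gtrsim \log(a_k t)$. With the choice $a_k \asymp c_\beta/(k (\log k)^{\beta-\text{something}})$ tuned appropriately, $K(t)$ is of order $t/(\log t)^{\beta}$ up to constants, and summing $\log(a_k t)$ over $k \leqslant K(t)$ yields a quantity of order $K(t) \asymp t/(\log t)^{\beta}$. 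This produces the claimed bound $\exp(-c_2 |\mathrm{Re}(z)|/(\log|\mathrm{Re}(z)|)^{\beta})$ for $|\mathrm{Re}(z)| \geqslant 2$, after absorbing lower-order terms into the constants $c_1, c_2$.

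The delicate point — and the step I expect to be the main obstacle — is the simultaneous calibration of the sequence $(a_k)$: it must decay slowly enough that the convolution product $\psi$ is genuinely $C^\infty$ (which forces $a_k$ to be summable but not too small, and in fact requires $\sum_k k^m a_k < \infty$ type conditions, or equivalently control of $\prod_k(1+a_k|z|)^{-1}$ along the real axis to beat any polynomial), yet fast enough that the Fourier decay rate reaches $|\mathrm{Re}(z)|/(\log|\mathrm{Re}(z)|)^{\beta}$ rather than something weaker. Getting the power of the logarithm exactly right means choosing $a_k$ comparable to $1/(k(\log(k+2))^{\beta})$ — up to constants depending on $\beta$ — and then carefully carrying out the two estimates (smoothness via the tail of the product near the real axis; decay via the count $K(t)$) with matched constants. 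This is precisely the calibration carried out in \cite[Lemma~6.2]{JNS} and, in slightly different language, in \cite{Katznelson}, and since the statement here is identical to \cite[Lemma~6.2]{JNS}, I would simply reproduce that argument essentially verbatim, as the authors indicate they do.
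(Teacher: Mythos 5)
Your construction is the same one the paper uses (infinite convolution of scaled indicators, Fourier transform factorizing as $\prod_k \sin(a_k z)/(a_k z)$, supports summing to $1$), and your target calibration $a_k \asymp 1/(k(\log k)^{\beta})$ is in the same family as the paper's $\mu_j \asymp 1/(j(\log j)^{1+\kappa})$. Where the two arguments genuinely diverge is the route from the product representation to the decay estimate. The paper never estimates the factorized Fourier transform directly: it works on the Fourier series side, bounds $|\widehat{\psi}(z)| \leqslant e^{|\mathrm{Im}(z)|}|x|^{-m}\|\psi^{(m)}\|_{L^2}$ by iterated integration by parts and Cauchy--Schwarz, controls $\|\psi^{(m)}\|_{L^2}$ via Plancherel by $\prod_{j\leqslant m+1}\mu_j^{-1}$, and then \emph{optimizes over the order of differentiation $m$} using Stirling, choosing $m \approx |x|/(\log|x|)^{1+2\kappa}$. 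Your plan instead bounds the infinite product directly by identifying the threshold index $K(t)$ at which $a_k t$ crosses $1$ and summing $\log(a_k t)$ over $k \leqslant K(t)$. These are two classical paths to the same Beurling--Malliavin phenomenon; yours is arguably the more direct and transparent (and in fact lands on the exponent $\beta$ without the paper's loss of a $\log\log$ factor that it absorbs into constants), while the paper's choice is better adapted to its downstream use because it simultaneously gives $C^\infty$ regularity from the same $\prod\mu_j^{-1}$ bound.

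One technical point to fix if you carry this out: the per-factor inequality $\left|\frac{\sin(a_k z)}{a_k z}\right| \leqslant \frac{e^{a_k|\mathrm{Im}(z)|}}{1 + a_k|\mathrm{Re}(z)|}$ is false as stated (already on the real axis near $0$, since $\sin(u)/u = 1 - u^2/6 + \cdots$ while $1/(1+u) = 1 - u + \cdots$). The correct elementary bound is $\left|\frac{\sin w}{w}\right| \leqslant e^{|\mathrm{Im}(w)|}\min\left(1, \frac{1}{|\mathrm{Re}(w)|}\right)$. Using it, the factors with $a_k|\mathrm{Re}(z)| < 1$ contribute nothing to the decay (their bound is just $e^{a_k|\mathrm{Im}(z)|}$), and only the finitely many $k\leqslant K(t)$ contribute $\prod_{k\leqslant K(t)}(a_k t)^{-1}$; this is exactly what your $K(t)$ argument needs, so the fix is harmless and the rest of your plan goes through.
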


\begin{proof}
Let $ (\mu_j)_{j\in \mathbb{N}} $ be a sequence of positive real numbers summing up to $1$:
\begin{equation}\label{add_up_to_one}
\sum_{j=1}^{\infty} \mu_j = 1.
\end{equation}
For $ k\in \mathbb{Z} $ and $ n\in \mathbb{N} $ set
\begin{equation*}
\varphi_n(k) = \prod_{j=1}^{n} \frac{\sin(\mu_j k)}{\mu_j k}, \quad \varphi(k) = \prod_{j=1}^{\infty} \frac{\sin(\mu_j k)}{\mu_j k}
\end{equation*}
and consider the corresponding Fourier series
\begin{equation*}
\psi_{n}(x) = \sum_{k \in \ZZ} \varphi_n(k) e^{ikx}, \quad \psi(x) = \sum_{k \in \ZZ} \varphi(k) e^{ikx}.
\end{equation*}
The rapid decay of $ \varphi(k) $ implies that $ \psi $ is a $ C^{\infty} $-function on the interval $ [-2\pi, 2\pi]. $ On the other hand, $\psi_n$ converges uniformly to $\psi$ as $n\to \infty$. Moreover, we have
\begin{equation}\label{faltung}
\psi_n = g_1 \ast g_2 \ast \ldots \ast g_n,
\end{equation}
where $\ast$ denotes convolution operator and each $g_j$ is defined as
\begin{equation*}
g_j(x) = \begin{cases} \frac{2\pi}{\mu_j} &\text{ if } \vert x\vert \leqslant \mu_j, \\ 0 &\text{ else. }  \end{cases}
\end{equation*}
From \eqref{faltung} and \eqref{add_up_to_one} we deduce that $\psi$ is a positive function with support in $ [-1,1] $. We extend $ \psi $ outside $[-1,1]$ by zero. 

In what follows, we write $x=\mathrm{Re}(z).$ Integration by parts and Cauchy--Schwarz Inequality yield for all $ z\in \mathbb{C} $ with $ \vert x\vert \geqslant  2 $ and all $ m\geqslant 1 $ the bound
\begin{equation}\label{Cauchy_Schwarz_application}
\vert \widehat{\psi}(z)\vert \leqslant \frac{e^{\vert \mathrm{Im}(z)\vert}}{\vert x\vert^m}\Vert \psi^{(m)}\Vert_{L^2(-1,1)}.
\end{equation}
Using Plancherel's Theorem, we obtain
\begin{equation}\label{plancherel}
\Vert \psi^{(m)}\Vert_{L^2(-1,1)}^2 = \sum_{k \in \ZZ} k^{2m}\varphi(k)^2\leqslant C \prod_{j=1}^{m+1}\mu_j^{-2},
\end{equation}
where $C>0$ is some absolute constant. Combining \eqref{Cauchy_Schwarz_application} and \eqref{plancherel} gives 
\begin{equation}\label{combi_cauchy_plancherel}
\vert \widehat{\psi}(z)\vert \leqslant C \frac{e^{\vert \mathrm{Im}(z)\vert}}{\vert x\vert^m} \prod_{j=1}^{m+1}\mu_j^{-1}.
\end{equation}
Now fix $ \kappa > 0 $ and we let $ (\wt\mu_j)_{j\in \mathbb{N}} $ be the sequence defined by
\begin{equation}
\wt\mu_1 = 1, \quad \wt\mu_j = \frac{1}{j \log(j)^{1+\kappa}} \text{ for } j\geqslant   2.
\end{equation}
Note that the sequence $(\wt\mu_j)_{j\in \mathbb{N}}$ converges for all $\kappa>0$, so we may consider the normalized sequence $ \mu_j = c \wt \mu_j $, where $ c = c(\kappa) $ is chosen so that the $ \mu_j $'s add up to one. Using \eqref{combi_cauchy_plancherel} we get
\begin{align*}
\vert \widehat{\psi}(z)\vert &\leqslant C\cdot  c^m\frac{e^{\vert \mathrm{Im}(z)\vert}}{\vert x\vert^m} \prod_{j=2}^{m+1} j \log(j)^{1+\kappa}\\
&\leqslant \frac{e^{\vert \mathrm{Im}(z)\vert}}{\vert x\vert^m} m! e^{(1+\kappa) m \log\log (m) + O(m)}.
\end{align*}
Using Stirling's formula for the factorial $m! =(1+o(1))\sqrt{2\pi m}  (\frac{m}{e})^m$ and choosing 
\begin{equation*}
m = \left\lfloor \frac{\vert x\vert}{(\log \vert x\vert)^{1+2\kappa}} \right\rfloor 
\end{equation*}
yields (after some calculations)
\begin{equation}
\vert \widehat{\psi}(z)\vert \leqslant e^{\vert \mathrm{Im}(z)\vert} \exp\left( - \kappa \frac{(\log \log \vert x\vert) \vert x\vert}{(\log \vert x\vert)^{1+2\kappa}} ( 1+o(1) ) \right), \quad x\to \infty.
\end{equation}
This completes the proof of Lemma \ref{lem:test_function} with $\beta = 1+2\kappa$.
\end{proof}

\section{Proof of Theorem \ref{thm:main_theorem}}\label{sec:proof1}
We now give the proof of our main Theorem \ref{thm:main_theorem}. Let $X$ be an non-elementary, infinite-area, convex cocompact hyperbolic surface with critical exponent $\delta > \frac{1}{2}$, and let $(X_n)_{n\in \mathbb{N}}$ be a family of finite-degree coverings of $X$. Recall that we consider the resonance counting function
$$
\mathsf{N}(\sigma, t; X) \defeq \#\left\{ s\in \mathcal{R}(X) : \text{$ \sigma \leqslant \mathrm{Re}(s)\leqslant \frac{1}{2} $ and $ \vert \mathrm{Im}(s)\vert < t  $} \right\}.
$$
The starting point of our proof is the wave 0-trace formula of Guillop\'{e}--Zworski \cite{GZ99} (see also \cite[Corollary 11.5]{Borthwick_book}). When applied to non-elementary, convex cocompact, hyperbolic surfaces $Y$ this formula takes the following form: for all test-functions $\varphi\in C_c^\infty((0,\infty))$ we have
\begin{equation}\label{startingPoint}
 \sum_{s\in \mathcal{R}(Y)} \wh\varphi\left( i \left( s-\frac12\right) \right) = -\frac{\vol_0(Y)}{4\pi} \int_{-\infty}^{\infty} \frac{\cosh(\frac{t}{2})}{\sinh(\frac{t}{2})^2} \varphi(t)dt + \sum_{\ell\in  \mathcal{L}(Y)}\sum_{k=1}^\infty \frac{\ell \varphi(k\ell)}{2 \sinh(\frac{k \ell}{2})},
\end{equation}
where 
\begin{itemize}
\item $\mathcal{R}(Y)$ is the set of resonances of $Y$, repeated with multiplicities,
\item $\mathcal{L}(Y)$ is the length spectrum, that is, the set of lengths of the primitive periodic geodesics on $ Y $ repeated with multiplicities,
\item $\vol_0(Y)$ is the 0-volume of $Y$, which equals the volume of the compact core of $Y$, and
\item $\widehat{\varphi}$ is the Fourier transform of $\varphi$ defined by \eqref{defi:fourier}.
\end{itemize}

By substituting $\varphi(t)$ by $\varphi(t)e^{-t/2}$ in \eqref{startingPoint} we obtain the following more convenient version of the trace formula:
\begin{equation}\label{startingPoint_0}
\sum_{s\in \mathcal{R}(Y)} \wh\varphi\left( i s \right) = -\frac{\vol_0(Y)}{4\pi} \int_{-\infty}^{\infty} \frac{\cosh(\frac{t}{2})}{\sinh(\frac{t}{2})^2}  e^{-\frac{t}{2}} \varphi(t)dt + \sum_{\ell\in  \mathcal{L}(Y)}\sum_{k=1}^\infty \frac{\ell \varphi(k\ell)}{1-e^{-k\ell}}.
\end{equation}
Resonances always come in conjugate pairs: if $s\in \mathcal{R}(Y)$, then also $\overline{s}\in \mathcal{R}(Y)$. This implies that whenever $\varphi$ is real-valued, then the sum over resonances on the left of \eqref{startingPoint_0} is real. If we assume further that $\varphi \geqslant 0$, then the integral on the right is positive, so we obtain the inequality
\begin{equation}\label{startingPoint_2}
\sum_{s\in \mathcal{R}(Y)} \wh\varphi\left( i s \right) \leqslant  \sum_{\ell\in  \mathcal{L}(Y)}\sum_{k=1}^\infty  \frac{\ell \varphi(k\ell)}{1-e^{-k\ell}}.
\end{equation}
Recall that Lemma \ref{lem:test_function} furnishes, for any $\beta > 1$, a non-negative function $\varphi = \varphi_\beta \in C_c^\infty(\mathbb{R})$ with $\supp(\varphi) = [-1,1]$ and
\begin{equation}\label{damping}
\vert \widehat{\varphi}(z) \vert \ll_\beta e^{\vert \mathrm{Im}(z)\vert} \exp\left( -c\frac{\vert \mathrm{Re}(z)\vert }{(\log \vert \mathrm{Re}(z)\vert)^{\beta}} \right), \quad \vert \mathrm{Re}(z)\vert \geqslant 2.
\end{equation}
For the rest of the proof we fix $\alpha > \beta > 1$ and we let $\varphi = \varphi_\beta$ be as above. Now we apply \eqref{startingPoint_2} to $Y = X_n$ and to the family of test functions
\begin{equation}\label{defiTestFunctions}
\varphi_{L,\eta}(x) \defeq \varphi\left( \frac{x-L}{\eta L} \right),
\end{equation} 
where $L$ and $\eta$ are positive parameters that we will choose further below, depending on $n$. By construction, $\varphi_{L,\eta}$ is a positive smooth function with support equal to
$$
\supp(\varphi_{L,\eta})  = [(1-\eta)L, (1+\eta)L].
$$
For the rest of this proof we assume that $0<\eta < 1 $ and that
$$
(1+\eta)L < \ell_0(X_n).
$$
This condition guarantees that the right of \eqref{startingPoint_2} equals zero, so
\begin{equation}\label{ineq0tr}
\sum_{s\in \mathcal{R}(X_n)} \wh\varphi_{L,\eta}\left( i s \right) \leqslant 0.
\end{equation}
Now we focus on the sum on the left. Given parameters $\sigma\in \mathbb{R}$ and $K> 0$ (to be chosen below) we partition the complex plane into the following four mutually disjoint regions:
\begin{itemize}
\item $R_1 = \{ \mathrm{Re}(s)>\frac{1}{2} \},$
\item $R_2 = \{ \text{$\sigma\leqslant \mathrm{Re}(s)\leqslant \frac{1}{2}$ and $\vert \mathrm{Im}(s)\vert\leqslant K$} \},$
\item $R_3 = \{ \text{$\sigma\leqslant \mathrm{Re}(s)\leqslant \frac{1}{2}$ and $\vert \mathrm{Im}(s)\vert > K$} \},$ and
\item $R_4 = \{ \mathrm{Re}(s)<\sigma\}.$
\end{itemize}
We divide the sum on the left of \eqref{ineq0tr} accordingly, writing
$$
\sum_{s\in \mathcal{R}(X_n)} \wh\varphi_{L,\eta}\left( i s \right) = S_1 + S_2 + S_3 + S_4,
$$
where 
$$
S_j = \sum_{s\in \mathcal{R}(X_n)\cap R_j} \wh\varphi_{L,\eta}\left( i s \right), \quad j\in \{ 1,2,3,4\}.
$$
Note that we have suppressed the dependence of the sums $S_j$ on $n,\eta,L,$ and $K$ since we assume these parameters to be fixed until the end of the proof. From \eqref{ineq0tr} we have
$$
S_1 + S_2 + S_3 + S_4 \leqslant 0.
$$
As we will see below, $S_1 >0$, so applying the triangle inequality gives
$$
S_1 \leqslant \vert S_2\vert + \vert S_3\vert + \vert S_4\vert. 
$$
Our goal now is to give a lower bound for $S_1$ and upper bounds for $S_2,S_3,$ and $S_4$. 

Note that by basic properties of the Fourier transform we have
$$
\wh\varphi_{L,\eta}\left( i s \right) = \eta L e^{sL} \wh\varphi( i \eta L s),
$$
which we will use repeatedly in the sequel.\\

\paragraph{\textit{Lower bound for $S_1$}}
Recall from the discussion in §\ref{sec:specth} that resonances in the half-plane $R_1 = \{ \mathrm{Re}(s)>\frac{1}{2} \}$ are related to eigenvalues $\lambda$ of the Laplacian via $\lambda = s(1-s)$:
$$
\mathcal{R}(X_n)\cap R_1 = \Omega(X_n) \subset [\frac{1}{2},\delta].
$$
It is easy to check that for all real $\xi$ we have $\wh\varphi\left( i \xi \right) \gg_\beta e^{-\xi}$ for some implied constant depending only on $\beta$. Hence, we have for all $s\in \Omega(X_n)$
$$
\wh\varphi_{L,\eta}\left( i s \right) = \eta L e^{sL} \wh\varphi( i \eta L s) \gg_\beta \eta L e^{sL(1-\eta)},
$$
and therefore,
\begin{equation}
S_1 = \sum_{s\in \Omega(X_n)} \wh\varphi_{L,\eta}\left( i s \right) \gg_\beta \eta L \sum_{s\in \Omega(X_n)} e^{sL(1-\eta)}.
\end{equation}
Note that $\lambda_0 = \delta(1-\delta)$ is the common base eigenvalue for all the surfaces $X_n$ (which by assumption are finite degree coverings of $X$). Thus, $\delta\in \Omega(X_n)$ for all $n\in \mathbb{N}$, whence
$$
S_1 \gg_\beta \eta L e^{\delta L(1-\eta)} .
$$

\paragraph{\textit{Upper bound for $S_2$}}
It is easy to verify that for all $s\in \CC$
\begin{equation}\label{triangleFourier}
\vert \wh\varphi_{L,\eta}\left( i s \right) \vert = \vert \int_{L(1-\eta)}^{L(1+\eta)}  \varphi_{L,\eta}(x) e^{sx} dx \vert \ll_{\beta} \eta L e^{ L(1 + \sign(\mathrm{Re}(s)) \eta ) \mathrm{Re}(s) }.
\end{equation}
Since all $s\in R_2$ have $\mathrm{Re}(s) \leqslant \frac{1}{2}$ we immediately obtain
$$
\vert S_2 \vert \ll_\beta \eta L e^{\frac{1}{2} L(1+\eta)}  \mathsf{N}(\sigma, K; X_n) .
$$

\paragraph{\textit{Upper bound for $S_3$}}
Here we use the decaying properties of the Fourier transform of $\varphi$. Assume furthermore that the parameters $\eta, L, K$ are chosen so that 
\begin{equation}\label{next_condotion}
\eta L K \geqslant 2.
\end{equation}
Then for all $s\in R_3$ we have $\eta L \vert \mathrm{Im}(s)\vert \geqslant 2$, so we may use the bound \eqref{damping} to obtain 
$$
\vert \wh\varphi_{L,\eta}\left( i s \right) \vert = \eta L e^{\mathrm{Re}(s) L} \vert \wh\varphi( i \eta L s) \vert \ll_\beta \eta L e^{\mathrm{Re}(s) L + \eta L \vert \mathrm{Re}(s)\vert }  \exp\left( - c \frac{\eta L \vert \mathrm{Im}(s)\vert}{( \log (\eta L \vert \mathrm{Im}(s)\vert) )^\beta} \right).
$$
Since all $s\in R_3$ have $\sigma \leqslant \mathrm{Re}(s) \leqslant \frac{1}{2}$ we get
\begin{equation}
\vert \wh\varphi_{L,\eta}\left( i s \right) \vert \ll_\beta \eta L e^{\frac{1}{2} L (1+\eta \max\{ 2\vert \sigma\vert , 1 \})}  f_\beta( \eta L \vert \mathrm{Im}(s)\vert),
\end{equation}
where $f_\beta$ is the function
\begin{equation}
f_\beta\colon [2, \infty) \to \mathbb{R}, \quad  f_\beta(t) = \exp\left( - c \frac{x}{( \log x )^\beta} \right).
\end{equation}
Therefore,
\begin{equation}\label{boundbeforestieltjes}
\vert S_3\vert \ll_\beta \eta L e^{\frac{1}{2} L (1+\eta \max\{ 2\vert \sigma\vert , 1 \})} \sum_{ s \in R_3 } f_\beta( \eta L \vert \mathrm{Im}(s)\vert).
\end{equation}
To estimate the sum on the left, recall from \eqref{pohl_soares} that we have the following bound for all $r\geqslant r_0(X)$:
$$
\mathsf{N}(r;X_n) \defeq \#\{ s\in\mathcal{R}(X_n) : \vert s\vert < r \} \ll \vol_0(X_n) r^2.
$$
Observe that $ \sigma \leqslant \mathrm{Re}(s)\leqslant \frac{1}{2}$ and $\vert \mathrm{Im}(s)\vert < t$ together imply $\vert s\vert \leqslant \sqrt{t^2 + \max\{ \vert \sigma\vert,\frac{1}{2} \}^2}$, so we have the rather crude bound
\begin{equation}\label{crude}
\mathsf{N}(\sigma, t; X_n) \leqslant \mathsf{N}\left( \sqrt{t^2 + \max\{ \vert \sigma\vert,\frac{1}{2} \}^2};X_n \right) \ll_\sigma \vol_0(X_n) t^2. 
\end{equation}
The remaining sum on the right of \eqref{boundbeforestieltjes} can be estimated with a Stieljes integral over the resonance counting function $\mathsf{N}(\sigma, t; X)$. Using integration by parts and the bound \eqref{crude} yields
\begin{align*}
\sum_{ s \in R_3 } f_\beta( \eta L \vert \mathrm{Im}(s)\vert) &\ll \int_{K}^{\infty} f_\beta (\eta L t ) \, d\mathsf{N}(\sigma, t; X_n)\\
&= \mathsf{N}(\sigma, K; X_n) f_\beta (\eta L K )  - \eta T \int_{K}^{\infty} \mathsf{N}(\sigma, t; X_n) f_\beta'( \eta L t ) \mathrm{d}t\\
&= \mathsf{N}(\sigma, K; X_n) f_\beta(\eta L K ) - \int_{K \eta T}^{\infty} \mathsf{N}(\sigma, \frac{u}{\eta L}; X_n) f_\beta' ( u ) \mathrm{d}u\\
&\ll_{\sigma} \vol_0(X_n) \left( K^{2} f_\beta (\eta L K )  -  \frac{1}{\eta^2 L^2}  \int_{K\eta L}^{\infty} u^{2} f_\beta' ( u ) \mathrm{d}u \right)
\end{align*}
It is an exercise to verify that for all $x\geqslant 2$
\begin{equation}\label{exercise}
-\int_{x}^{\infty} u^{2} f_\beta' ( u )  du \ll_\beta x^{3} f_\beta (x).
\end{equation}
Thus, returning to \eqref{boundbeforestieltjes}, we obtain
\begin{align*}
\vert S_3\vert &\ll_{\beta,\sigma} \eta L e^{\frac{1}{2} L (1+\eta \max\{ 2\vert \sigma\vert , 1 \})}   \sum_{ s \in R_3 } f_\beta ( \eta L \, \mathrm{Im}(s) )\\
&\ll_{\beta,\sigma}  \vol_0(X_n) \eta L e^{\frac{1}{2} L (1+\eta \max\{ 2\vert \sigma\vert , 1 \})} \left( K^{2}   + \eta L K^3  \right) f_\beta (\eta L K ) .
\end{align*}

\paragraph{\textit{Upper bound for $S_4$}}
Using the definition of the Fourier transform and repeated integration by parts, we obtain for all $m\in \mathbb{N}$
$$
\vert \wh\varphi_{L,\eta}\left( i s \right)\vert \ll_m \eta L \frac{e^{\mathrm{Re}(s) L(1+ \sign(\mathrm{Re}(s)) \eta) }}{\langle s\rangle^m}, \quad  \langle s \rangle \defeq \sqrt{1+\vert s\vert^2}.
$$
Thus, since all $s\in R_4$ have $\mathrm{Re}(s)\leqslant \sigma$ we get
$$
\vert S_4\vert \leqslant \sum_{s\in \mathcal{R}(X_n)\cap R_4} \vert \wh\varphi_{L,\eta}\left( i s \right) \vert \ll \eta L e^{\sigma L(1+  \sign(\sigma)  \eta)} \sum_{s\in \mathcal{R}(X_n) } \langle s\rangle^{-m}.
$$
We now take $m=3$ and we estimate the sum on the right with a Stieltjes integral
\begin{equation}
\sum_{s\in \mathcal{R}(X_n)} \langle s\rangle^{-3} = \int_0^{\infty}  \langle s\rangle^{-3}   d\mathsf{N}(r;X_n)
\end{equation}
over the global resonance counting function $\mathsf{N}(r;X_n)$. Using the bound \eqref{pohl_soares} yields
\begin{equation}
\sum_{s\in \mathcal{R}(X_n)} \langle s\rangle^{-3} \ll \vol_0(X_n)
\end{equation}
and therefore,
\begin{equation}
\vert S_4\vert \ll \vol_0(X_n) \eta L e^{\sigma L \left(1+  \sign(\sigma)  \eta \right)}.
\end{equation}
\\

\paragraph{\textit{Putting everything together}} 
Recall that 
$$
S_1 \leqslant \vert S_2\vert + \vert S_3\vert + \vert S_4\vert. 
$$
Combining the above estimates and writing $v_n = \vol_0(X_n)$ for notational convenience gives
\begin{align}
\eta L e^{\delta L(1-\eta)}  &\ll_{\beta,\sigma} \eta L e^{\frac{1}{2} L(1+\eta)}  \mathsf{N}(\sigma, K; X_n) + \label{final_bound_0}  \\
&\qquad + v_n \eta L e^{\frac{1}{2} L (1+\eta \max\{ 2\vert \sigma\vert , 1 \})} \left( K^{2}  + \eta L K^{3}  \right) f_\beta (\eta L K  )+ v_n \eta L e^{\sigma L(1+  \sign(\sigma)  \eta)} \nonumber,
\end{align}
provided the parameters $\eta, L,$ and $K$ satisfy the conditions
$$
0<\eta<1, \quad (1+\eta) L < \ell_0(X_n), \quad  \eta L K \geqslant 2.
$$
Dividing both sides of \eqref{final_bound_0} by $\eta L$ gives
\begin{align}
e^{\delta L(1-\eta)} &\ll_{\beta,\sigma} e^{\frac{1}{2}L(1+\eta)}  \mathsf{N}(\sigma, K; X_n) \label{final_bound} \\
&\qquad  + v_n e^{\frac{1}{2} L (1+\eta \max\{ 2\vert \sigma\vert , 1 \})} \left( K^{2}  +\eta L K^{3}  \right) f_\beta (\eta L K  ) + v_n e^{\sigma L(1+  \sign(\sigma)  \eta)} \nonumber.
\end{align}
Recall that by assumption we have as $n\to \infty$:
\begin{itemize}
\item $v_n \to \infty$ and
\item $\ell_0(X_n)\geqslant (A-o(1)) \log v_n$ for some constant $A>0$.
\end{itemize}
Now we fix some constant $\nu > 0$ satisfying $\alpha >\beta +\nu$ and we choose 
$$
\eta = \frac{1}{(\log \log v_n)^\nu}, \quad K = (\log \log v_n)^\alpha,
$$
and
$$
L = (A-w(n)) \log v_n
$$
for some suitable function $w(n)\to 0$ that ensures $(1+\eta) L < \ell_0(X_n)$. Clearly, we also have $0<\eta<1$ and $\eta L K \geqslant 2$ for all $n$ large enough. Recall that we have defined $f_\beta (x) = \exp\left( - c \frac{x}{( \log x )^\beta} \right) $, so one can check that for some constant $c' > 0$ we have
$$
f_\beta (\eta L K  ) \leqslant \exp\left( - c' (\log v_n) (\log \log v_n)^{\alpha - \beta - \nu}  \right).
$$
That is, our choices of $\nu, \eta, L,$ and $K$ imply that $f_\beta (\eta L K )$ decays faster as any polynomial in $v_n$. In particular, for any fixed $\sigma$, the second summand on the right of \eqref{final_bound} tends to zero as $n\to \infty$:
$$
v_n  e^{\frac{1}{2} L (1+\eta \max\{ 2\vert \sigma\vert , 1 \})} \left( K^{2} + \eta L K^{3}  \right) f_\beta (\eta L K  ) \to 0. 
$$
Therefore, for all $n$ large enough, the bound \eqref{final_bound} reduces to
\begin{equation}\label{toberearr}
e^{\delta L(1-\eta)} \ll_{\beta,\sigma} e^{\frac{1}{2}L(1+\eta)} \mathsf{N}(\sigma, K; X_n) + v_n e^{\sigma L(1+  \sign(\sigma)  \eta)}.
\end{equation}
Now we fix some $\epsilon'  >0$. By the above choices, we obtain that once
$$
\sigma \leqslant \delta -\frac{1}{A} - \epsilon',
$$
the term $v_n e^{\sigma L(1+  \sign(\sigma)  \eta)}$ becomes smaller than $e^{\delta L(1-\eta)}$ for all $n$ sufficiently large. Thus, upon rearranging \eqref{toberearr}, we obtain with $\sigma = \delta -\frac{1}{A} - \epsilon'$ the following lower bound:
$$
\mathsf{N}\left(\delta -\frac{1}{A}-\epsilon', K; X_n \right) \gg \frac{e^{\delta L(1-\eta)} - v_n e^{\sigma L(1+  \sign(\sigma)  \eta)} }{e^{\frac{1}{2}L(1+\eta)}} \gg e^{\delta L(1-\eta) - \frac{1}{2}L(1+\eta)}.
$$
It is now easy to check that for any fixed $\epsilon > 0$ and for all sufficiently large $n$ we have
$$
e^{\delta L(1-\eta) - \frac{1}{2}L(1+\eta)} = e^{ (\delta-\frac{1}{2})L - ( \delta+\frac{1}{2} )\eta L  } \gg v_n^{A ( \delta - \frac{1}{2} )-\epsilon},
$$
which completes the proof of Theorem \ref{thm:main_theorem}.

\section{Applying the main theorem to congruence covers}\label{sec:proof2}

We now give the proof of Theorem \ref{thm:congr}. For the rest of this section, we fix a non-elementary, convex cocompact subgroup $\Gamma\subset \mathrm{SL}_2(\ZZ)$. Note that the (principal) congruence subgroup $\Gamma(n)$ is nothing else but the kernel $\ker(\pi_n)$ of the reduction modulo $n$ map
$$
\pi_n \colon \Gamma \to \mathrm{SL}_2(\ZZ/n\ZZ),\quad \gamma\mapsto \gamma\mod n.
$$
By the result of Bourgain--Varj\'{u} \cite{Bourgain_Varju} there exists an integer $n_0$ such that for all $n$ with $\gcd(n,n_0)=1$ the map $\pi_n$ is surjective. In what follows, we assume that $\gcd(n,n_0)=1$, so $\Gamma(n)$ is a finite-index subgroup of $\Gamma$ of index
$$
[\Gamma:\Gamma(n)] = \# \mathrm{SL}_2(\ZZ/n\ZZ) \asymp n^3,
$$
and therefore, the Euler-characteristic of $X(n)$ is
$$
\vert \chi(X(n))\vert = [\Gamma:\Gamma(n)] \vert \chi(X)\vert \asymp n^3 \vert \chi(X)\vert.
$$ 
Using the Gauss-Bonnet relation $\vol_0(X) = -2\pi \chi(X)$ leads to
\begin{equation}\label{voln3}
\vol_0(X(n)) \asymp n^3,
\end{equation}
with implied constant depending only on the base surface $X.$ 

Now let us give a lower bound for the length of the shortest geodesic on $X(n)$. It is well-known that the set of closed geodesics on $ Y \cong \Pi\backslash \mathbb{H}^2 $ is bijective to the set $[\Pi]$ of conjugacy classes $[\gamma]$ of the primitive hyperbolic elements $ \gamma\in \Pi $, and the length of the geodesic corresponding to the conjugacy class of $\gamma \in \Pi$ equals the displacement length $\ell(\gamma)$, see for instance \cite{Borthwick_book}. Thus we need to give a lower bound for $\ell(\gamma)$ for all hyperbolic $\gamma\in \Gamma(n)$. To that effect, recall that the displacement length is given by the relation
\begin{equation}\label{eq_displacement}
\vert \mathrm{tr}(\gamma) \vert = 2 \cosh\left( \frac{\ell(\gamma)}{2} \right) < 2 e^{\frac{\ell(\gamma)}{2}},
\end{equation}
so it suffices to give a lower bound for the trace. For the rest of this section we write
$$
\gamma = \pmat{a}{b}{c}{d}, \quad a,b,c,d\in \ZZ, \quad ad-bc = 1.
$$
We write $I=\pmat{1}{0}{0}{1}$ for the identity, and we put $\Gamma^\ast = \Gamma\smallsetminus \{ I\}$. Suppose $\gamma\in \Gamma(n)^\ast$ for some $n\geqslant 2$, or equivalently, suppose that $\gamma$ satisfies the congruence
\begin{equation}\label{congruence_pm}
\gamma \equiv I \mod n.
\end{equation}
Then, using an argument of Sarnak--Xue \cite{SarnakXue}, we can show that the trace of $\gamma$ satisfies
\begin{equation}\label{non_trivial_congreunce}
\tr(\gamma) = a+d \equiv 2 \mod n^{2}.
\end{equation}
To see this, note that the congruence \eqref{congruence_pm} implies that there are integers $ a', b', c', d'\in \ZZ $ such that
$$
\text{$ a = a'n + 1 $, $ b = b'n $, $ c = c'n $, and $ d = d'n + 1 $.}
$$
Furthermore, the condition $ad-bc = 1$ gives
\begin{equation}
1 = (a'd' -b'c') n^{2} + (a'+d')n + 1,
\end{equation}
which forces
$$
a'+d' = 0 \mod n.
$$
But this implies that
$$
\mathrm{tr}(\gamma) = (a'+d' )n + 2 = 2 \mod n^2,
$$ 
as claimed. Now since $\gamma$ is a hyperbolic element we have $\vert \mathrm{tr}(\gamma)\vert  > 2$, which when combined with \eqref{non_trivial_congreunce} implies that for all $n\geqslant 2$ 
\begin{equation}\label{lowerBoundForTrace}
\vert \tr(\gamma)\vert \geqslant   n^{2}-2 \gg   n^2.
\end{equation}
Combining this with \eqref{eq_displacement} we obtain that
$$
\ell_0(X(n)) = \min_{\gamma\in \Gamma(n)^\ast} \ell(\gamma) \geqslant 2 \min_{\gamma\in \Gamma(n)^\ast} \log \left( \frac{\vert \mathrm{tr}(\gamma)\vert}{2} \right) \geqslant (4-o(1))\log(n).
$$
Invoking the bound \eqref{voln3}, this gives
$$
\ell_0(X(n)) \geqslant \left(\frac{4}{3}-o(1)\right) \log (\vol_0(X(n))).
$$
Thus, we can apply our main Theorem \ref{thm:main_theorem} with $A=\frac{4}{3}$: by increasing $n_0$ if necessary, we obtain that for all $n$ coprime to $n_0$ we have
$$
\mathsf{N}\left( \delta - \frac{3}{4} - \epsilon', (\log \log n)^\alpha; X(n) \right) \gg_{\alpha,\epsilon,\epsilon'}  \vol_0(X(n))^{\frac{4}{3} ( \delta-\frac{1}{2} )-\epsilon} \gg_{\alpha,\epsilon,\epsilon'} n^{4 ( \delta-\frac{1}{2} )-\epsilon}.
$$
This completes the proof of Theorem \ref{thm:congr}.

\normalem
\bibliography{low_lying_resonances} 
\bibliographystyle{amsplain}

\end{document}